\newcommand{\C}{\mathbb{C}}
\newcommand{\D}{\mathbb{D}}
\newcommand{\R}{\mathbb{R}}
\newcommand{\Harm}{\mbox{BHarm}}
\newtheorem{theorem}{Theorem}[section]
\newtheorem{lemma}[theorem]{Lemma}
\theoremstyle{definition}
\theoremstyle{theorem}
\theoremstyle{theorem}
\newtheorem{proposition}[theorem]{Proposition}
\theoremstyle{theorem}
\theoremstyle{theorem}
\theoremstyle{definition}
\theoremstyle{theorem}
\numberwithin{equation}{section}
\begin{document}
\title {An operator-theoretic existence proof of solutions to planar Dirichl\'et problems}
\author{Timothy H. McNicholl}
\address{Department of Mathematics\\
              Lamar University\\
              Beaumont, Texas 77710 USA}
\email{timothy.h.mcnicholl@gmail.com}

\begin{abstract}
By using some elementary techniques from operator theory, we prove constructively prove the 
existence of solutions to Dirichl\'et problems for planar Jordan domains with at least two boundary curves.
An iterative method is thus obtained, and explicit bounds on the error in the resulting approximations are given.  Finally, a closed form for the solution is given.  No amount of differentiability of the boundary is assumed.
\end{abstract}
\keywords{harmonic functions, Dirichl\'et problems, operator theory, constructive complex analysis}
\subjclass[2010]{31, 47}

\maketitle

\section{Introduction}

Suppose we are given a bounded Jordan domain $D \subseteq \C$ and a piecewise continuous function $f : \partial D \rightarrow \R$.  The resulting 
\emph{Dirichl\'et problem} is to find a harmonic function on $D$, $u$, such that 
\begin{eqnarray}
\lim_{z \rightarrow \zeta} u(z) & = & f(\zeta)\label{eqn:DIRI}
\end{eqnarray}
for all $\zeta \in \partial D$ at which $f$ is continuous.  It is well-known that this Dirichl\'et problem has a solution.  Once the existence of $u$ is demonstrated, uniqueness follows immediately from the Maximum Principle for harmonic functions.  That is, there is exactly one harmonic function on $D$, $u$, for which Equation (\ref{eqn:DIRI}) holds. See, \emph{e.g.}, Chapters I and II of \cite{Garnett.Marshall.2005}.  Accordingly, we denote this function $u_f$.  Dirichl\'et problems for other kinds of domains exist but will not be considered here.

 When $D$ is simply connected, that is when $D$ is the interior of a Jordan curve, it is fairly straightforward to prove the existence of $u_f$.  
 In particular, when $D$ is the unit disk $\D$, 
 \[
 u_f(z) = \frac{1}{2\pi} \int_{\partial \D} f(\zeta) P(z, \zeta) ds_\zeta\mbox{,}
 \]
 where 
 \[
 P(z, \zeta) = \frac{1 - |z|^2}{|z - \zeta|^2}
 \]
 and $ds_\zeta$ is the differential of arc length with respect to the variable $\zeta$.
 The function $P$ is of course called the \emph{Poisson kernel}.  Since the composition of a harmonic function with an analytic function yields a harmonic function, the existence of $u_f$ for simply connected Jordan domains now follows from the Riemann Mapping Theorem and the Carath\'eodory Theorem.

When $D$ is bounded by more than one Jordan curve, there are at least two methods available to prove the existence of $u_f$.
One is an extreme-value argument as in Section 6.4.2 of \cite{Ahlfors.1978}.  If one desires a constructive proof, a natural choice is the Schwarz alternating method which is described in Section IV.2 of \cite{Courant.Hilbert.1989.2} and in Chapter II of \cite{Garnett.Marshall.2005}.  A sequence of harmonic functions $u_1 \geq u_2 \geq u_3 \ldots$ that converges to $u$ is produced thereby.  However, the proof does not give any information about the rate of convergence which is essential for error control when designing a numerical method.  Nevertheless, it is hinted in the exercises of \cite{Garnett.Marshall.2005} that the Schwarz alternating method can be turned into an integral operator which in turn leads to a numerical method with error control.

Here, we will turn this chain of ideas on its head and give what appears to be a new and constructive proof of the existence of solutions to Dirichl\'et problems for planar Jordan domains based on some fairly elementary ideas from operator theory.  Namely, we will first define an integral operator and then show it has a fixed point $u$.  We will then show that this fixed point extends to a harmonic function on $D$ for which Equation (\ref{eqn:DIRI}) holds.  We will also give explicit
bounds on the error in the resulting sequence of iterations.  Finally, we use these results to derive a closed form for the solution to the Dirichl\'et problem.  All results hold for an arbitrary Jordan domain with at least two boundary curves, even those whose boundary is nowhere differentiable.

\section{A few preliminaries}

When $r$ is a positive real and $z_0 \in \C$, let $D_r(z_0)$ denote the open disk whose center is $z_0$ and whose radius is $r$.  Let $\D = D_1(0)$.  

Let $\Harm(D)$ denote the space of bounded harmonic functions on $D$ with the sup norm.

\begin{proposition}\label{prop:COMPLETE}
If $D$ is open, then $\Harm(D)$ is complete.
\end{proposition}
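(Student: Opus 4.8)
The plan is to show $\Harm(D)$ is a closed subspace of the Banach space of bounded continuous functions on $D$ with the sup norm. Since $D$ is open, this ambient space $C_b(D)$ is complete under uniform convergence, so it suffices to prove that a uniform limit of bounded harmonic functions is itself harmonic. First I would take a Cauchy sequence $(u_n)$ in $\Harm(D)$; by completeness of $C_b(D)$ it converges uniformly to some bounded continuous $u$, and uniform convergence preserves the sup-norm bound, so $u$ is bounded. The only thing left is to verify that $u$ is harmonic.

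To establish harmonicity of the limit, I would use the mean value characterization rather than trying to pass derivatives through the limit directly. A continuous function is harmonic on $D$ if and only if it satisfies the mean value property on every closed disk $\overline{D_r(z_0)} \subseteq D$, namely
\[
u(z_0) = \frac{1}{2\pi} \int_0^{2\pi} u(z_0 + r e^{i\theta})\, d\theta.
\]
Each $u_n$ satisfies this identity. Because $(u_n)$ converges to $u$ uniformly on $D$ — and in particular uniformly on the compact circle $\{z_0 + re^{i\theta} : \theta \in [0,2\pi]\}$ — I may interchange the limit with the integral, so $u$ inherits the mean value property on every such disk. The converse direction of the mean value characterization then forces $u$ to be harmonic.

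The main obstacle, such as it is, lies in justifying the mean value characterization of harmonicity and the limit-integral interchange cleanly, given that the paper wishes to avoid assuming smoothness and to stay constructive. The interchange is routine since uniform convergence on the compact circle dominates the integrand. Identifying the correct ambient complete space also requires a small check: I would confirm that bounded harmonic functions are automatically continuous on the open set $D$, so that $\Harm(D)$ genuinely embeds in $C_b(D)$ as a linear subspace, and that the limit's boundedness is controlled by the uniform Cauchy estimate. With these pieces in place, closedness of $\Harm(D)$ inside the complete space $C_b(D)$ yields completeness of $\Harm(D)$.
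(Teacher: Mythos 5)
Your argument is correct and follows essentially the same route as the paper: take a Cauchy sequence, note that it converges uniformly to a bounded continuous function, and verify harmonicity of the limit by passing the uniform limit through the mean value integral on each closed disk contained in $D$. The framing via closedness in $C_b(D)$ is a cosmetic repackaging of the same computation, so no further comparison is needed.
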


\begin{proof}
Let $u_1, u_2, \ldots$ be a Cauchy sequence in $\Harm(D)$.  
It follows that this sequence converges uniformly to a function $u : D \rightarrow \R$.
It follows that $u$ is continuous and bounded.  It remains to show that $u$ is harmonic.  
To do so, we use the mean value property.  Accordingly, suppose $\overline{D_r(z_0)} \subseteq D$.  Then, 
\begin{eqnarray*}
u(z_0) & = & \lim_{n \rightarrow \infty} u_n(z_0)\\
& = & \lim_{n \rightarrow \infty} \frac{1}{2\pi} \int_0^{2\pi} u_n(z_0 + re^{i \theta}) d\theta\\
& = & \frac{1}{2\pi} \int_0^{2\pi} \lim_{n \rightarrow \infty} u_n(z_0 + re^{i \theta}) d \theta\\
& = & \frac{1}{2\pi} \int_0^{2\pi} u(z_0 + re^{i\theta}) d\theta.
\end{eqnarray*} 
It follows that $u$ is harmonic.
\end{proof}

\section{An operator-theoretic existence proof}\label{sec:OPERATOR}

Suppose $D$ is a bounded Jordan domain.  Suppose $f : \partial D \rightarrow \R$ is piecewise continuous.

Let $\sigma_1, \ldots, \sigma_{n-1}$, $\tau_1, \ldots, \tau_{n-1}$ be pairwise disjoint arcs such that 
$D_1 =_{df} D - \bigcup \tau_j$ and $D_2 =_{df} D - \bigcup \sigma_j$ are simply connected.  The case when $D$ is bounded by four curves is illustrated in Figure \ref{fig:ARCS}.
\begin{figure}[!h]
\resizebox{4.5in}{4.5in}{\includegraphics{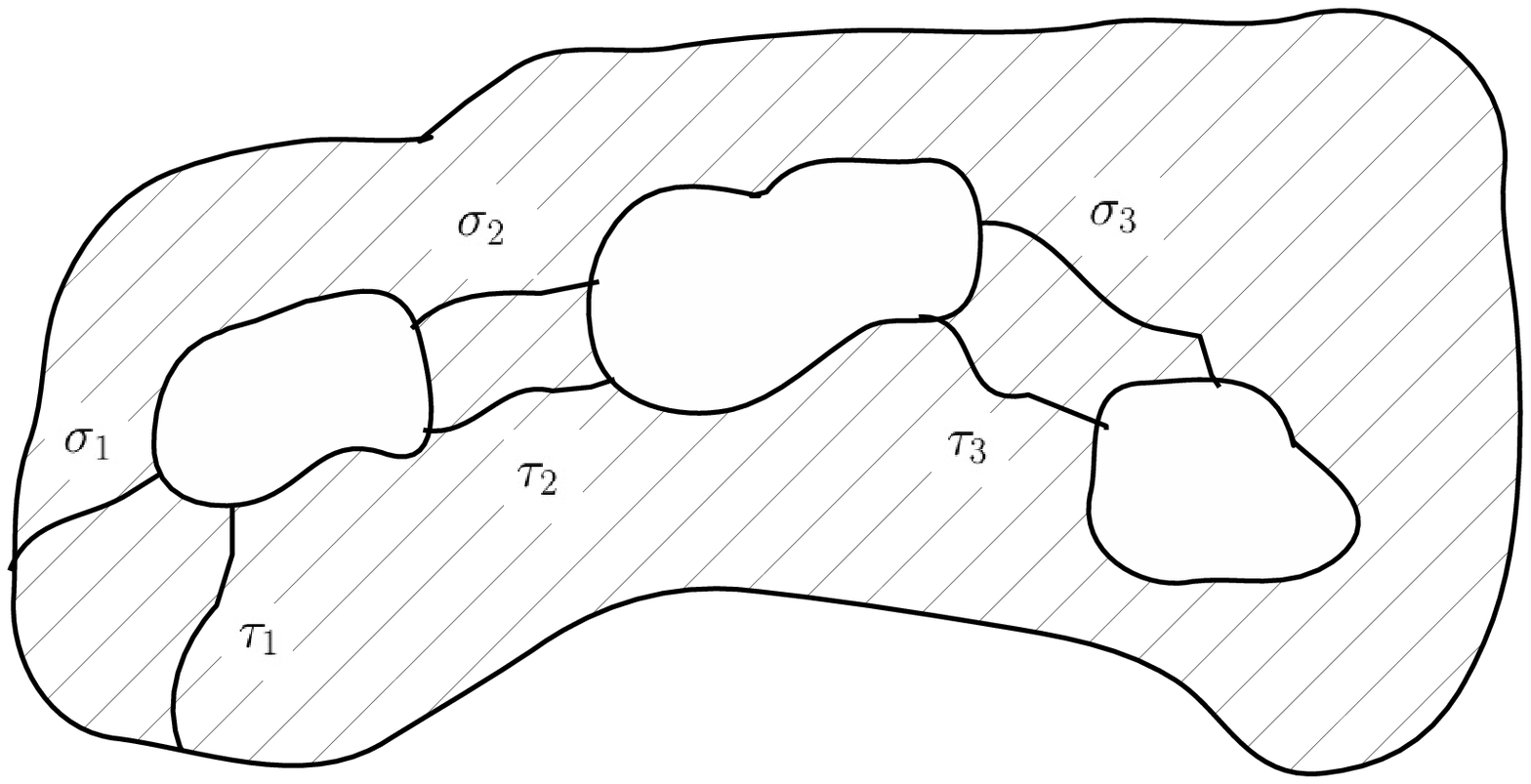}}
\caption{ }\label{fig:ARCS}
\end{figure}
Let $\sigma = \bigcup_j \sigma_j$, and let $\tau = \bigcup_j \tau_j$.

Let $\phi_j$ be a continuous map of $\overline{\D}$ onto $\overline{D_j}$ that is conformal on $\D$.  Thus, each point of $\overline{D} - \sigma$ has exactly one preimage under $\phi_1$, and each point of $\overline{D} - \tau$ has exactly one preimage under $\phi_2$.  The existence of these maps follows from Theorem 2.1 of \cite{Pommerenke.1992}.  
Let: 
\begin{eqnarray*}
A_j & = & \phi_j^{-1}[\partial D]\\
B_1 &= & \phi_1^{-1}[\sigma]\\
B_2 & = & \phi_2^{-1}[\tau]
\end{eqnarray*}

Let $H_1$ be the harmonic function on $D_1$ determined by the boundary conditions 
\[
H_1(\zeta) = \left\{\begin{array}{cc}
						f(\zeta) & \mbox{if $\zeta \in \partial D$}\\
						0 & \mbox{if $\zeta \in \sigma - \partial D$}\\
						\end{array}
						\right. ,
\]
and let $H_2$ be the harmonic function on $D_2$ determined by the boundary conditions
\[
H_2(\zeta) = \left\{\begin{array}{cc}
						f(\zeta) & \mbox{if $\zeta \in \partial D$}\\
						0 & \mbox{if $\zeta \in \tau - \partial D$}\\
						\end{array}
						\right.
\]
It follows that $H_1$ and $H_2$ have the integral forms:
\begin{eqnarray*}
H_1(z) & = &  \frac{1}{2\pi} \int_{A_1} P(\phi_1^{-1}(z), \zeta) f(\phi_1(\zeta)) ds_\zeta\\
H_2(z) & = & \frac{1}{2\pi} \int_{A_2} P(\phi_2^{-1}(z), \zeta) f(\phi_2(\zeta)) ds_\zeta\\
\end{eqnarray*}
Let $H_3$ be the harmonic function on $D_1$ determined by the boundary conditions
\[
H_3(\zeta) = \left\{\begin{array}{cc}
					H_2(\zeta) & \mbox{if $\zeta \in \sigma$}\\
					0 & \mbox{otherwise}\\
					\end{array}
					\right.
\]
Thus, $H_3$ has the integral form
\begin{eqnarray}
H_3(z) & = & \frac{1}{2\pi} \int_{B_1} P(\phi_1^{-1}(z), \zeta) H_2(\phi_1(\zeta)) ds_\zeta.
\end{eqnarray}
Finally, let 
\begin{eqnarray}
h & = & H_1 + H_3. 
\end{eqnarray}
For each $\zeta_1 \in B_2$, let $K(\cdot, \zeta_1)$ be the harmonic function on $D_1$ defined by the boundary conditions
\[
K(\zeta, \zeta_1) = \left\{ \begin{array}{cc}
								0 &  \zeta \not \in \sigma\\
								2\pi P(\phi_2^{-1}(\zeta), \zeta_1) & \zeta \in \sigma\\
								\end{array}
								\right.
\]
Thus, $K$ has the integral form
\[
K(z, \zeta_1) = \int_{B_1} P(\phi_2^{-1}\phi_1(\zeta), \zeta_1) P(\phi_1^{-1}(z), \zeta) ds_\zeta.
\]

Note that $\phi_2^{-1}(\zeta)$ is bounded away from $B_2$ as $\zeta$ ranges over $\sigma$.  Accordingly, let 
\begin{eqnarray*}
m & = & \max\left\{ \int_{B_2} P(\phi_2^{-1}(\zeta), \zeta_1) ds_{\zeta_1}\ :\ \zeta \in \sigma\right\}.
\end{eqnarray*} 
It follows that $m < 2\pi$.

We now define an operator on $\Harm(D_1)$.  When $v$ is harmonic on $D_1$, let $F(v)$ denote the function on $D_1$ defined by the equation
\[
F(v)(z) = h(z) + \frac{1}{(2\pi)^2} \int_{B_2} K(z, \zeta_1) v(\phi_2(\zeta_1)) ds_{\zeta_1}.
\]

The key lemma is the following.

\begin{lemma}\label{lm:CONTRACTION}
$F$ is a contraction map on $\Harm(D_1)$.  In particular, for all $v_1, v_2 \in \Harm(D_1)$, 
\[
\parallel F(v_1) - F(v_2) \parallel_\infty \leq \frac{m}{2\pi} \parallel v_1 - v_2 \parallel_\infty.
\]
\end{lemma}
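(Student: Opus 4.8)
The plan is to exploit the linearity of $F$ in its non-constant part together with the nonnegativity of the Poisson kernel, reducing everything to an estimate on the total mass of $K$. First I would observe that the term $h$ is independent of the argument of $F$, so for any $v_1, v_2 \in \Harm(D_1)$ it cancels in the difference, leaving
\[
F(v_1)(z) - F(v_2)(z) = \frac{1}{(2\pi)^2} \int_{B_2} K(z, \zeta_1) \bigl( v_1(\phi_2(\zeta_1)) - v_2(\phi_2(\zeta_1)) \bigr) ds_{\zeta_1}.
\]
Since $K$ is built out of products of Poisson kernels evaluated at interior points against boundary points, $K(z, \zeta_1) \geq 0$ throughout the region of integration. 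Bounding $|v_1(\phi_2(\zeta_1)) - v_2(\phi_2(\zeta_1))|$ by $\| v_1 - v_2 \|_\infty$, using that the sup norm controls the boundary values at which $v_1, v_2$ are evaluated on the slit $\tau$, therefore gives
\[
|F(v_1)(z) - F(v_2)(z)| \leq \frac{1}{(2\pi)^2} \| v_1 - v_2 \|_\infty \int_{B_2} K(z, \zeta_1) ds_{\zeta_1}.
\]
Everything then comes down to showing $\int_{B_2} K(z, \zeta_1) ds_{\zeta_1} \leq 2\pi m$.

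For this estimate I would substitute the integral form of $K$ and interchange the order of integration. Because both Poisson factors are nonnegative, Tonelli's theorem applies without any further justification, yielding
\[
\int_{B_2} K(z, \zeta_1) ds_{\zeta_1} = \int_{B_1} P(\phi_1^{-1}(z), \zeta) \left( \int_{B_2} P(\phi_2^{-1}\phi_1(\zeta), \zeta_1) ds_{\zeta_1} \right) ds_\zeta.
\]
The crucial observation is that for $\zeta \in B_1$ we have $\phi_1(\zeta) \in \sigma$, so the inner integral is exactly of the form appearing in the definition of $m$; hence it is bounded by $m$ uniformly in $\zeta \in B_1$. Pulling this constant out leaves $\int_{B_1} P(\phi_1^{-1}(z), \zeta) ds_\zeta$, and since $B_1 \subseteq \partial \D$ and the full Poisson integral over the unit circle equals $2\pi$, nonnegativity of $P$ bounds this remaining integral by $2\pi$. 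Combining the two bounds gives $\int_{B_2} K(z, \zeta_1) ds_{\zeta_1} \leq 2\pi m$.

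Finally, feeding this back in produces the factor $\frac{1}{(2\pi)^2} \cdot 2\pi m = \frac{m}{2\pi}$, and taking the supremum over $z \in D_1$ yields the stated inequality; that $\frac{m}{2\pi} < 1$ is immediate from $m < 2\pi$, which was noted when $m$ was introduced. I expect the main obstacle to be bookkeeping rather than conceptual: one must verify that the inner integral genuinely matches the defining expression for $m$ so that the uniform bound is really available, and one must be slightly careful that $\phi_2(\zeta_1)$ lands on the boundary of $D_1$ so that $v_1, v_2$ are being compared at points where the sup norm legitimately controls their difference. Both are routine once the factorization via Tonelli is in place, which is the real engine of the argument.
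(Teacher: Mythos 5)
Your contraction estimate is correct and is essentially the paper's own argument: both proofs reduce to bounding $\int_{B_2} K(z,\zeta_1)\, ds_{\zeta_1}$ by $2\pi m$, via interchanging the order of integration, recognizing that for $\zeta \in B_1$ the point $\phi_1(\zeta)$ lies in $\sigma$ so that the inner integral is one of the quantities over which $m$ is the maximum, and then bounding the remaining integral $\int_{B_1} P(\phi_1^{-1}(z),\zeta)\, ds_\zeta$ by $2\pi$ using nonnegativity of $P$ and the fact that the full Poisson integral over $\partial \D$ is $2\pi$. Your substitution of Tonelli for Fubini is a harmless (indeed cleaner) variation, since the integrand here is nonnegative.

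There is, however, a genuine omission: you prove only the displayed inequality and not the first clause of the lemma, namely that $F$ maps $\Harm(D_1)$ into itself. This is not a formality --- it is exactly what licenses the later appeal to the contraction mapping theorem on the complete space $\Harm(D_1)$ (Proposition \ref{prop:COMPLETE}), so that the fixed point is guaranteed to be a bounded harmonic function on $D_1$. The paper devotes the first half of its proof to this: it expands $F(v)(z)$ via Fubini's Theorem into $H_1(z)$ plus two integrals over $B_1$ against the kernel $P(\phi_1^{-1}(z),\zeta)$ (Equation (\ref{eqn:EXPANDED})), each of which is a Poisson integral of a bounded function pulled back by the conformal map $\phi_1$ and hence harmonic on $D_1$. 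Boundedness of $F(v)$ does follow from your Lipschitz estimate together with $F(\mathbf{0}) = h$, but harmonicity of $F(v)$ requires this expansion (or some equivalent argument), and your proposal never addresses it. Finally, the caveat you raise at the end --- that $v_i(\phi_2(\zeta_1))$ is evaluated at points of $\tau$, which lie on the boundary of $D_1$ rather than in $D_1$, so the sup norm over $D_1$ controls these values only for functions extending continuously to $\tau$ --- is a real feature of the setup that your argument flags but does not resolve; the paper's proof passes over it silently, so you are not worse off than the original on this point, but it is not ``routine bookkeeping'' either.
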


\begin{proof}
We first show that $F(v)$ is a harmonic function on $D_1$ whenever $v$ is.  That is, 
$F$ maps $\Harm(D_1)$ into $\Harm(D_1)$.
This can be seen by expanding $F(v)(z)$ and applying Fubini's Theorem so as to obtain
\begin{eqnarray}
\nonumber F(v)(z) & = & H_1(z) \\
\nonumber & + & \frac{1}{2\pi} \int_{B_1} \left[ \frac{1}{2\pi} \int_{A_2} f(\phi_2(\zeta_1)) P(\phi_2^{-1}\phi_1(\zeta), \zeta_1) ds_{\zeta_1} \right] P(\phi_1^{-1}(z), \zeta) ds_\zeta\\
& + & \frac{1}{2\pi} \int_{B_1} \left[ \frac{1}{2\pi} \int_{B_2} v(\phi_2(\zeta_1)) \nonumber P(\phi_2^{-1}\phi_1(\zeta), \zeta_1) ds_{\zeta_1}\right] P(\phi_1^{-1}(z), \zeta) ds_\zeta\\\label{eqn:EXPANDED}
\end{eqnarray}
Each summand in Equation (\ref{eqn:EXPANDED}) defines a harmonic function on $D_1$.  Thus, $F(v)$ is harmonic on $D_1$ whenever $v$ is.  

We now show that $F$ is a contraction map.
It follows from Fubini's Theorem that 
\[
\int_{B_2} K(z, \zeta_1) ds_{\zeta_1} = \int_{B_1} \left[ \int_{B_2} P(\phi_2^{-1}\phi_1(\zeta), \zeta_1) ds_{\zeta_1} \right] P(\phi_1^{-1}(z), \zeta) ds_\zeta.
\]
However, 
\begin{eqnarray*}
 \int_{B_1} \left[ \int_{B_2} P(\phi_2^{-1}\phi_1(\zeta), \zeta_1) ds_{\zeta_1} \right] P(\phi_1^{-1}(z), \zeta) ds_\zeta & \leq & m \int_{B_1} P(\phi_1^{-1}(z), \zeta) ds_{\zeta}\\
 & \leq & 2\pi m.
\end{eqnarray*}
The conclusion follows.
\end{proof}

So, let $u$ be the fixed point of $F$.

\begin{lemma}\label{lm:SUCCESS}
$u$ extends to a harmonic function on $D$ such that $f(\zeta) = \lim_{z \rightarrow \zeta} u(z)$ for all $\zeta \in \partial D$ at which $f$ is continuous.
\end{lemma}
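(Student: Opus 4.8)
The plan is to recognise the fixed-point equation $u = F(u)$ as one full cycle of the Schwarz alternating process and then to glue $u$ to a companion function across the slits into a single harmonic function on $D$. Since $\tau$ lies in the interior of $D_1$, the restriction of $u$ to $\tau$ is defined and continuous, so I let $w$ be the bounded harmonic function on $D_2$ whose boundary values are $f$ on $\partial D$ and $u$ on $\tau$. Unwinding $h = H_1 + H_3$ and the definition of $K$, and combining the Poisson integrals over $A_2$ and $B_2$ into a single integral over $\partial \D$ exactly as in the proof of Lemma \ref{lm:CONTRACTION}, I would show that the boundary values of $F(u)$ are $f$ on $\partial D$ and $w$ on $\sigma$. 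Because $u = F(u)$, this produces the three identities I shall rely on: $u = f = w$ on $\partial D$, $u = w$ on $\sigma$, and $w = u$ on $\tau$.

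The crux is to show that $u$ and $w$ agree on the common region $D_1 \cap D_2 = D - (\sigma \cup \tau)$. Set $\psi = u - w$; it is bounded and harmonic there, since this set lies in both $D_1$ and $D_2$, and its boundary is contained in $\partial D \cup \sigma \cup \tau$. On $\partial D$ both functions tend to $f$, so $\psi \to 0$. Approaching $\sigma$ from within the overlap, $u$ tends to its boundary value $w$ on $\sigma$ while $w$, being harmonic across $\sigma$ (which sits in the interior of $D_2$), is continuous; hence $\psi \to 0$. Approaching $\tau$, $w$ tends to its boundary value $u$ on $\tau$ while $u$ is continuous across $\tau$; hence again $\psi \to 0$. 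Thus $\psi$ is bounded, harmonic, and tends to $0$ at every boundary point except the finitely many arc endpoints and points of discontinuity of $f$. The maximum principle for bounded harmonic functions with finitely many exceptional boundary points then forces $\psi \equiv 0$, so $u = w$ on $D_1 \cap D_2$. I expect this to be the main obstacle: it requires justifying the continuity of $u$ and $w$ up to the slits and invoking the version of the maximum principle that tolerates the junction points where the arcs meet $\partial D$.

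Next I would glue. Because $D = D_1 \cup \sigma$, I can define a function $U$ on $D$ by $U = u$ on $D_1$ and $U = w$ on $\sigma$; the agreement just established makes this unambiguous. Near any point of $\sigma$ one has $U = w$, which is harmonic there since $\sigma$ lies in the interior of $D_2$, and elsewhere $U = u$ is harmonic; hence $U$ is harmonic on all of $D$ and extends $u$. In particular the two boundary values of $u$ along the banks of $\sigma$ coincide, both equal to the restriction of $w$ to $\sigma$, which is precisely what permits the harmonic extension across the slit.

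Finally I would verify the boundary condition. Fix $\zeta \in \partial D$ at which $f$ is continuous. Away from the arc endpoints, $U$ agrees locally with the Poisson representation of $u$, whose datum on $\partial D$ is $f$; the continuity of $\phi_1$ on $\overline{\D}$ then yields $U(z) \to f(\zeta)$. At an arc endpoint, the two pieces of the boundary datum for $u$, pulled back by $\phi_1$ to $A_1$ and $B_1$, share the common limit $f(\zeta)$ there (since $w = f$ on $\partial D$), so the pulled-back datum is continuous at $\phi_1^{-1}(\zeta)$ and the same conclusion follows. This shows that $U$ solves the Dirichl\'et problem, completing the proof.
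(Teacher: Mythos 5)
Your proof is correct and follows essentially the same route as the paper's: your $w$ is exactly the paper's auxiliary function $g$, the reading of the fixed-point equation $u=F(u)$ as a pair of coupled boundary-value problems on $D_1$ and $D_2$ is the same, and your maximum-principle argument on $D_1\cap D_2$ is the paper's argument on the components $S_1,\dots,S_n$ of $D-(\sigma\cup\tau)$. You are merely more explicit than the paper about the exceptional boundary points (arc endpoints and discontinuities of $f$) in the maximum-principle step and about verifying the boundary condition at the end.
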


\begin{proof}
By Lemma \ref{lm:CONTRACTION}, $u$ is harmonic on $D_1$.  It follows from Equation (\ref{eqn:EXPANDED}) and Fubini's Theorem that 
\[
u(z) = F(u)(z) = H_1(z) + \frac{1}{2\pi} \int_{B_1} g(\phi_1(\zeta)) P(\phi_1^{-1}(z), \zeta) ds_\zeta
\]
where
\[
g(z) = \frac{1}{2\pi} \int_{A_2} P(\phi_2^{-1}(z), \zeta_1) f(\phi_2(\zeta_1)) ds_{\zeta_1} + \frac{1}{2\pi} \int_{B_2} P(\phi_2^{-1}(z), \zeta_1) u(\phi_2(\zeta_1)) ds_{\zeta_1}.
\]
Thus, $g$ is the harmonic function on $D_2$ defined by the boundary conditions
\[
g(\zeta) = \left\{\begin{array}{cc}
					f(\zeta) & \zeta \in \partial D\\
					u(\zeta) & \zeta \in \tau - \partial D\\
					\end{array}
					\right.
\]
It also follows that $u$ is the harmonic function on $D_1$ defined by the boundary conditions
\[
u(\zeta) = \left\{ \begin{array}{cc}
						f(\zeta) & \zeta \in \partial D\\
						g(\zeta) & \zeta \in \sigma - \partial D\\
						\end{array}
						\right.
\]
Hence, $g(\zeta) = u(\zeta)$ for all $\zeta \in \sigma$.  Decompose $D$ into the simply connected domains $S_1, \ldots, S_n$ as in Figure \ref{fig:DECOMP}.  
It follows that $u(\zeta) = g(\zeta)$ for all $\zeta \in \partial S_j$.  It then follows that $u(z) = g(z)$ for all $z \in S_j$.  It then follows that $u$ extends to a harmonic function on $D$ that solves the given Dirichl\'et problem.
\begin{figure}[!h]
\resizebox{4.5in}{4.5in}{\includegraphics{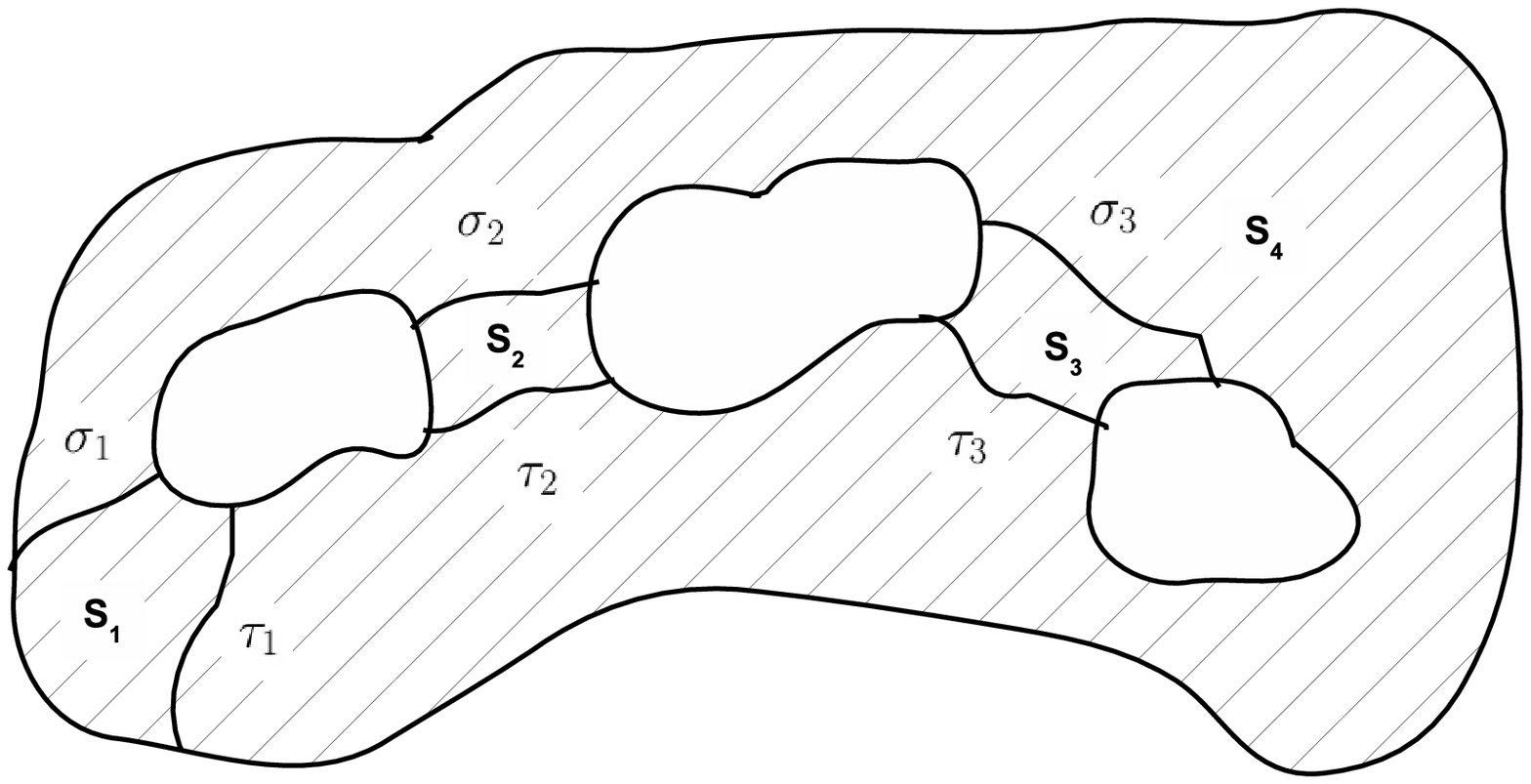}}
\caption{}\label{fig:DECOMP}
\end{figure}
\end{proof}

A few moments of reflection will reveal that what the operator $F$ does is the basic step in the Schwarz alternating method.  However, by couching things in the framework of operator theory, it becomes possible to prove convergence using the contraction mapping theorem.  The proof thus gives more information than would be obtained by using Harnack's Principle since it reveals the rate of convergence of the construction which is important for computation.

\section{A closed form}\label{sec:CLOSED}

\newcommand{\K}{\mbox{\bf K}}
We first perform a procedure similar to kernel iteration.  
Let
\begin{eqnarray*}
\K^{(1)}(z, \zeta_1) & = & K(z, \zeta_1)\\
\K^{(n+1)}(z, \zeta_1) & = & \int_{B_2^n} K(z, \zeta_{n+1}) \prod_{j=1}^n K(\phi_2(\zeta_{j+1}), \zeta_j) ds_{\zeta_2} \ldots ds_{\zeta_{n+1}}\ \ \mbox{$n = 1, 2, \ldots$}\\
\K(z, \zeta_1) & = & \sum_{n=1}^\infty \frac{1}{(2\pi)^{2n}}\K^{(n)}(z, \zeta_1).
\end{eqnarray*}

\begin{theorem}\label{thm:CLOSED.FORM.1}
Suppose $D$ is a planar Jordan domain with at least two boundary curves and that $f : \partial D \rightarrow \R$ is piecewise continuous.  Let $h$, $B_j$, \emph{etc.} be as in Section \ref{sec:OPERATOR}.  Then, 
\begin{eqnarray}
u_f(z) & = & h(z) + \int_{B_2} \K(z, \zeta_1) h(\phi_2(\zeta_1)) ds_{\zeta_1}\label{eqn:CLOSED}
\end{eqnarray}
Furthermore, 
\begin{eqnarray}
\nonumber\left| h(z) + \sum_{n=1}^t \frac{1}{(2\pi)^{2n}} \int_{B_2} \K^{(n)}(z, \zeta_1) h(\phi_2(\zeta_1)) d_{s_{\zeta_1}} - u(z) \right| & \leq & \left( \frac{m}{2\pi} \right)^{t+1} \frac{2\pi}{2\pi - m} \parallel f \parallel_\infty. \\
\label{eqn:RATE} 
\end{eqnarray}
\end{theorem}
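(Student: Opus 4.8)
The plan is to realize the closed form as the Neumann series of the contraction operator $F$ and then read off the error bound from the standard contraction-mapping estimate. Recall that $F(v) = h + Tv$, where $T$ is the linear integral operator
\[
(Tv)(z) = \frac{1}{(2\pi)^2} \int_{B_2} K(z, \zeta_1) v(\phi_2(\zeta_1)) ds_{\zeta_1}.
\]
By Lemma \ref{lm:CONTRACTION}, $T$ is a bounded linear operator on $\Harm(D_1)$ with $\parallel T \parallel \leq m/(2\pi) < 1$. Since $u = F(u) = h + Tu$, we get $(I - T)u = h$, and because $\parallel T \parallel < 1$ the operator $I - T$ is invertible with $u = (I - T)^{-1} h = \sum_{n=0}^\infty T^n h$. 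The first step is therefore to write out this Neumann series explicitly.

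Next I would identify $T^n h$ as the integral against $\K^{(n)}$. The key computation is to show that iterating $T$ against the kernel $K$ produces exactly the convolution-type kernels $\K^{(n)}$ defined just before the theorem. Concretely, I would prove by induction on $n$ that
\[
(T^n h)(z) = \frac{1}{(2\pi)^{2n}} \int_{B_2} \K^{(n)}(z, \zeta_1) h(\phi_2(\zeta_1)) ds_{\zeta_1},
\]
with the base case $n = 1$ being the definition of $T$ together with $\K^{(1)} = K$. The inductive step applies Fubini's Theorem to compose one more copy of $K$, collapsing the product $\prod_{j=1}^n K(\phi_2(\zeta_{j+1}), \zeta_j)$ into $\prod_{j=1}^{n+1}$ and raising the normalizing power of $2\pi$. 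Summing over $n \geq 1$ and adding the $n = 0$ term (which is simply $h$) then yields Equation (\ref{eqn:CLOSED}), once the series $\sum_n (2\pi)^{-2n} \K^{(n)}$ is interchanged with the integral over $B_2$; the interchange is justified by the geometric majorization $\parallel T^n \parallel \leq (m/(2\pi))^n$ coming from Lemma \ref{lm:CONTRACTION}.

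For the error bound (\ref{eqn:RATE}), I would use the standard tail estimate for the Neumann series. The partial sum $h + \sum_{n=1}^t T^n h$ differs from $u = \sum_{n=0}^\infty T^n h$ by the tail $\sum_{n=t+1}^\infty T^n h$, whose norm is at most
\[
\sum_{n=t+1}^\infty \parallel T \parallel^n \parallel h \parallel_\infty \leq \left( \frac{m}{2\pi} \right)^{t+1} \frac{1}{1 - m/(2\pi)} \parallel h \parallel_\infty = \left( \frac{m}{2\pi} \right)^{t+1} \frac{2\pi}{2\pi - m} \parallel h \parallel_\infty.
\]
Finally I would bound $\parallel h \parallel_\infty \leq \parallel f \parallel_\infty$ using the Maximum Principle: since $h = H_1 + H_3$ and both $H_1$ and $H_3$ are solutions to Dirichl\'et problems with boundary data bounded by $\parallel f \parallel_\infty$, the sup norm of $h$ is controlled by $\parallel f \parallel_\infty$. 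I expect the main obstacle to be the careful bookkeeping of the Fubini interchanges in the inductive step — verifying that the iterated kernels genuinely telescope into the stated product form, and that the integrability needed to apply Fubini is supplied by the boundedness of the Poisson kernels away from $B_2$ (as already noted in the definition of $m$). The norm estimate $\parallel h \parallel_\infty \leq \parallel f \parallel_\infty$ also deserves a line of justification, since $h$ is a sum of two harmonic functions and one must confirm that their combined boundary data does not exceed $\parallel f \parallel_\infty$ in absolute value.
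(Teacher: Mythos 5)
Your proposal is correct and takes essentially the same route as the paper: the partial sums of your Neumann series $\sum_{n=0}^{t} T^{n}h$ are exactly the iterates $F^{t+1}(\mathbf{0})$ that the paper computes by induction, and the geometric tail estimate, the Maximum Principle bound $\parallel h \parallel_\infty \leq \parallel f \parallel_\infty$, and the Fubini-based identification of $T^{n}h$ with the iterated kernels all coincide with the paper's argument. The only point the paper makes that you omit is the closing observation that the identities, established for $z \in D_1$, extend to all of $D$ by continuity.
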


\begin{proof}
\newcommand{\zero}{\mathbf 0}
Let $\zero$ denote the zero function on $D_1$.  Again, let $u$ denote the fixed point of $F$.  It follows that $u$ is the restriction of $u_f$ to $D_1$.  Let $z \in D_1$.  By induction on $t = 1, 2, 3, \ldots$, 
\[
F^{t+1}(\zero)(z) = h(z) + \sum_{n=1}^t \frac{1}{(2\pi)^{2n}} \int_{B_2} \K^{(n)}(z, \zeta_1) h(\phi_2(\zeta_1)) ds_{\zeta_1}.
\]
It follows from Lemma \ref{lm:CONTRACTION} that 
\begin{eqnarray*}
\parallel F^j(\zero) - F^{j+1}(\zero) \parallel_\infty & \leq & \left(\frac{m}{2\pi} \right)^j \parallel F(\zero) \parallel_\infty\\
& = & \left( \frac{m}{2\pi} \right)^j \parallel h \parallel_\infty.
\end{eqnarray*}
By the Maximum Principle, $\parallel h \parallel_\infty \leq \parallel f \parallel_\infty$.  By a fairly standard calculation, 
\begin{eqnarray*}
\parallel F^{t+1}(\zero) - u \parallel_\infty & \leq & \sum_{n = t+1}^\infty \left( \frac{m}{2\pi}\right)^k \parallel f \parallel_\infty\\
& = & \left( \frac{m}{2\pi} \right)^{t+1} \frac{2\pi}{2\pi - m} \parallel f \parallel_\infty. 
\end{eqnarray*}
It follows that Equations (\ref{eqn:CLOSED}) and (\ref{eqn:RATE}) hold whenever $z \in D_1$.  Hence, by continuity, they hold for all $z \in D$.
\end{proof}

We mention here that D. Crowdy and J. Marshall have obtained closed formulas for Green's 
function in multiply connected domains.  See \cite{Crowdy.Marshall.2007}.

\bibliographystyle{amsplain}
\def\cprime{$'$}
\providecommand{\bysame}{\leavevmode\hbox to3em{\hrulefill}\thinspace}
\providecommand{\MR}{\relax\ifhmode\unskip\space\fi MR }
\providecommand{\MRhref}[2]{%
  \href{http://www.ams.org/mathscinet-getitem?mr=#1}{#2}
}
\providecommand{\href}[2]{#2}

\end{document}